\def\row#1/#2!{#1_{\IfStrEq{#2}{}{n}{#2}} & \dynkin{#1}{#2}\\}
\newtheorem{theorem}{Theorem}
\newtheorem{corollary}{Corollary}
\newtheorem{lemma}{Lemma}
\newtheorem{conjecture}{Conjecture}
\def \T{\textup{T}}
\def \Disc{\textup{Disc}}
\def \rank{\textup{rank}}
\def \diag{\textup{diag~}}
\newcommand{\rmnum}[1]{\romannumeral #1}
\newcommand\restr[2]{{% we make the whole thing an ordinary symbol
		\left.\kern-\nulldelimiterspace % automatically resize the bar with \right
		#1 % the function
%		\vphantom{\big|} % pretend it's a little taller at normal size
		\right|_{#2} % this is the delimiter
}}
\newcommand{\Rmnum}[1]{\expandafter\@slowromancap\romannumeral #1@}
\title{On the walk matrix of the Dynkin graph $D_n$}
\author{\small  Wei Wang\thanks{Corresponding author: wangwei.math@gmail.com}\quad\quad Chuanming Wang\quad\quad Songlin Guo
\\
{\footnotesize$^{\rm a}$School of Mathematics, Physics and Finance, Anhui Polytechnic University, Wuhu 241000, P. R. China}
}
\date{}
\begin{document}
	\maketitle
	
	\begin{abstract}
		Let $W(D_n)$ denote the walk matrix of the Dynkin graph $D_n$, a tree obtained from the path of order $n-1$  by adding
		a pendant edge at the second vertex. We prove that $\rank\,W(D_n)=n-2$ if $4\mid n$ and $\rank\,W(D_n)=n-1$ otherwise. Furthermore, we prove that the Smith normal form of $W(D_n)$ is $$\diag[\underbrace{1,1,\ldots,1}_{\lceil\frac{n}{2}\rceil},\underbrace{2,2,\ldots,2}_{\lfloor\frac{n}{2}\rfloor-1},0]$$  when $4\nmid n$. This confirms a recent conjecture in [W.~Wang, F.~Liu, W.~Wang, Generalized spectral characterizations of almost controllable graphs, European J. Combin., 96(2021):103348].\\
		
		\noindent\textbf{Keywords}: walk matrix; main eigenvalue; Smith normal form; Dynkin graph\\
		
		\noindent
		\textbf{AMS Classification}: 05C50
	\end{abstract}
	\section{Introduction}
	Let $G$ be a simple $n$-vertex graph with adjacency matrix $A$. The \emph{walk matrix} of $G$ is
	\begin{equation}
	W(G):=[e,Ae,\ldots,A^{n-1}e],
	\end{equation}
	where $e$ is the all-ones vector of dimension $n$.
	
	Let $D_n$ ($n\ge 4$) be a family of graphs as shown in Fig.~1. Let $\hat{W}(D_n)$ denote the $(n-1)\times (n-1)$ matrix obtained from $W(D_n)$ by removing the first row and the last column. For example, when $n=5$, we have
	\begin{equation}
	W(D_5)=\begin{pmatrix}
1 & 1 & 3 & 4 & 10 \\
1 & 1 & 3 & 4 & 10 \\
1 & 3 & 4 & 10 & 14 \\
1 & 2 & 4 & 6 & 14 \\
1 & 1 & 2 & 4 & 6 \\
	\end{pmatrix}\quad\text{and}\quad
		\hat{W}(D_5)=\begin{pmatrix}
1 & 1 & 3 & 4 \\
1 & 3 & 4 & 10 \\
1 & 2 & 4 & 6 \\
1 & 1 & 2 & 4 \\
	\end{pmatrix}.
	\end{equation}
	\begin{conjecture}[\cite{wang2021Eujc}]\label{main}
			\begin{equation}\label{conequ1}
		\det \hat{W}(D_n)=
		\begin{cases}
		\pm 2^{\lfloor\frac{n}{2}\rfloor-1} &\text{if~$4\nmid n$},\\
		0 & \text{if~$4\mid n$.}
		\end{cases}
		\end{equation}
	\end{conjecture}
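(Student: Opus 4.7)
The plan is to exploit the $\mathbb Z/2$ automorphism $\sigma$ of $D_n$ that swaps the two pendant vertices incident to the branch vertex, reducing the determinant of $\hat W(D_n)$ to a Krylov determinant on an $(n-1)$-dimensional quotient. Decompose $\mathbb R^n=V_+\oplus V_-$ into the $(\pm 1)$-eigenspaces of the permutation matrix $P_\sigma$. Since $P_\sigma$ commutes with $A$, every column $A^k e$ of $W(D_n)$ lies in $V_+$, and the two rows of $W(D_n)$ indexed by the symmetric pendants are identical, so discarding row $1$ loses no information. Writing $M$ for the matrix of $A|_{V_+}$ in the basis $\{e_1+e_2,\ e_3,\ \ldots,\ e_n\}$ of $V_+$ (so $M$ is tridiagonal with $M_{21}=2$ and otherwise identical to the adjacency of a path on $n-1$ vertices), a direct calculation gives
\[\hat W(D_n)\;=\;\bigl[e',\,Me',\,M^2 e',\,\ldots,\,M^{n-2}e'\bigr],\qquad e'=(1,1,\ldots,1)^{\top}\in\mathbb Z^{n-1}.\]

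Next, analyze $M$ spectrally. The three-term recurrence $p_k(x)=xp_{k-1}(x)-p_{k-2}(x)$ for $k\geq 3$ with $p_2(x)=x^2-2$ gives, via $x=2\cos\theta$, the identity $p_{n-1}(x)=2\cos((n-1)\theta)$, so the eigenvalues of $M$ are $\mu_k=2\cos\theta_k$ with $\theta_k=(2k+1)\pi/(2(n-1))$, $k=0,\ldots,n-2$. The diagonal conjugation $S=EME^{-1}$ with $E=\diag(\sqrt 2,1,\ldots,1)$ symmetrizes $M$ into a weighted-path matrix $S$, whose orthonormal eigenvectors have the closed form $\psi_{k,1}=1$, $\psi_{k,j}=\sqrt 2\cos((j-1)\theta_k)$ for $j\geq 2$, and squared norm $n-1$. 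The determinant $\det\hat W(D_n)$ vanishes iff some projection $c_k:=\hat\psi_k\cdot(Ee')$ is zero. The Dirichlet cosine kernel evaluates this as
\[c_k\;=\;\frac{1}{\sqrt{2(n-1)}}\bigl[\,1+(-1)^k\cot(\theta_k/2)\,\bigr],\]
and since $\theta_k/2\in(0,\pi/2)$ forces $\cot(\theta_k/2)>0$, the condition $c_k=0$ demands $k$ odd and $\theta_k=\pi/2$, i.e., $n$ even with $(n-2)/2$ odd, i.e., $4\mid n$. This settles the vanishing clause.

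For $4\nmid n$, the eigendecomposition $K(M,e')=E^{-1}V\cdot\diag(c)\cdot V_\mu$ (with $V$ orthogonal, $V_\mu$ the Vandermonde of $(\mu_k)$) gives $\det\hat W(D_n)=\pm(\det E)^{-1}\prod_k c_k\cdot\prod_{i<j}(\mu_j-\mu_i)$. Using $p_{n-1}'(\mu_k)=(-1)^k(n-1)/\sin\theta_k$ together with the classical identity $\prod_k\sin\theta_k=2^{-(n-2)}$ evaluates $\bigl|\prod(\mu_j-\mu_i)\bigr|=(n-1)^{(n-1)/2}\cdot 2^{(n-2)/2}$, and after cancellation the claim reduces to the trigonometric identity
\[\Bigl|\prod_{k=0}^{n-2}\bigl[1+(-1)^k\cot(\theta_k/2)\bigr]\Bigr|\;=\;2^{\lfloor n/2\rfloor}.\]
Rewriting each bracket as $\sqrt 2\,\sin\!\bigl(\theta_k/2+(-1)^k\pi/4\bigr)/\sin(\theta_k/2)$ and applying $\prod_k\sin(\theta_k/2)=2^{-(2n-3)/2}$ turns the left side into a product of sines at rational multiples of $\pi$ over two angle families indexed by the parity of $k$. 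The main obstacle is evaluating this sine product cleanly: depending on $n\bmod 4$, the shifted arguments $(n+4m)\pi/(4(n-1))$ and $(4m+4-n)\pi/(4(n-1))$ hit different subsets of the $4(n-1)$-th roots, and showing that the product collapses to $2^{\lfloor n/2\rfloor}$ requires standard cyclotomic or Chebyshev sine-product identities applied separately in each parity class, together with a sign analysis to determine the sign of $\det\hat W(D_n)$. If the trigonometric route proves too intricate, a safer fallback is an explicit integer column reduction of $\hat W(D_n)$ using the Chebyshev basis $\{\chi_{P_k}(M)e'\}_{k=0}^{n-2}$, where the near-path structure of $M$ should produce a nearly block-triangular matrix whose diagonal directly exhibits the $2^{\lfloor n/2\rfloor-1}$.
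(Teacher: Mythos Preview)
Your framework is essentially the paper's own: your symmetric quotient matrix $M$ is exactly the divisor matrix $B$ of the equitable partition $\{\{1,2\},\{3\},\ldots,\{n\}\}$, your identity $\hat W(D_n)=[e',Me',\ldots,M^{n-2}e']$ is the paper's Lemma~\ref{cofdet}, your Krylov-via-Vandermonde formula is Lemma~\ref{relWA}, and your eigenvalues $\mu_k=2\cos\theta_k$ with $\theta_k=(2k+1)\pi/(2(n-1))$ match Lemma~\ref{eigB}. Your symmetrization by $E=\diag(\sqrt2,1,\ldots,1)$ is a cosmetic variant of the paper's direct use of $B^\T$. Your vanishing analysis (the $c_k=0$ criterion forcing $\theta_k=\pi/2$ with $k$ odd, hence $4\mid n$) is correct and parallels Claim~3 of Lemma~\ref{etxi}.

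The genuine gap is the evaluation of the remaining product. You reduce correctly to
\[
\Bigl|\prod_{k=0}^{n-2}\bigl[1+(-1)^k\cot(\theta_k/2)\bigr]\Bigr|=2^{\lfloor n/2\rfloor},
\]
but then stop, appealing vaguely to ``standard cyclotomic or Chebyshev sine-product identities applied separately in each parity class'' and offering an un-executed fallback. This is precisely the heart of the argument, and it is \emph{not} routine: the paper spends most of its technical effort here (Lemma~\ref{etxi}), treating $n\equiv 2\pmod 4$ and $n$ odd separately. For $n\equiv 2\pmod4$ one must recognize that the shifted angles run through a complete residue system mod $4m+1$ and invoke Vieta for $U_{4m}$ (Lemma~\ref{pc4mp1}); for $n$ odd one needs a residue argument mod $8m$ together with Vieta for $T_{4m}$ (Lemma~\ref{pc8m}) and a squaring/halving trick as in \eqref{pmsq}. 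None of this is supplied in your write-up, and your parity-split rewriting of the brackets as shifted sines does not by itself make the required residue-system bijections visible. (Incidentally, no sign analysis is needed: the statement only claims $\pm2^{\lfloor n/2\rfloor-1}$.) Until the product is actually computed, the proposal remains a correct outline rather than a proof.
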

	\begin{figure}
	\centering
	\includegraphics[height=2cm]{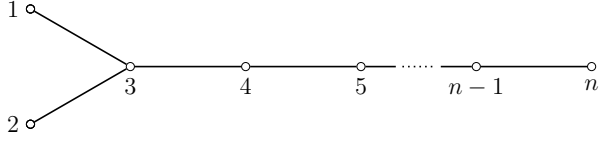}
	\caption{Graph $D_n$}
\end{figure}
The main aim of this paper is to confirm Conjecture \ref{main}. Indeed, we shall prove a more refined result.
\begin{theorem}\label{mainthm}
	We have
	
\textup{(\rmnum{1})} 	If $4\nmid n$ then $\det \hat{W} (D_n)=\pm 2^{\lfloor\frac{n}{2}\rfloor-1}$.
	
	\textup{(\rmnum{2})} If $4\mid n$ then $\rank\,\hat{W} (D_n)=n-2$.
\end{theorem}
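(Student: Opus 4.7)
The plan is to identify $\hat{W}(D_n)$ as a Krylov matrix. Since the involution of $D_n$ swapping vertices $1$ and $2$ forces $w_1(k)=w_2(k)$ for all $k$, substituting into the walk recurrences at vertices $3,\ldots,n$ gives $C_{k+1} = B\,C_k$ for the columns of $\hat{W}(D_n)$ (with rows indexed by vertices $2,3,\ldots,n$), where $B$ is the $(n-1)\times(n-1)$ tridiagonal matrix with $1$'s on both off-diagonals except $B_{2,1}=2$, and $C_0 = (1,\ldots,1)^{\top}$. A cofactor expansion shows that the principal minors of $xI-B$ obey $f_k = xf_{k-1}-f_{k-2}$ with $f_1=x$ and $f_2=x^2-2$, hence $f_{n-1}(x) = 2T_{n-1}(x/2)$; the eigenvalues of $B$ are the $n-1$ distinct numbers $\lambda_m = 2\cos\theta_m$ with $\theta_m = (2m+1)\pi/(2(n-1))$, and a Chebyshev ansatz produces the left eigenvector $w^{(m)}$ with $j$-th coordinate $\cos((j-1)\theta_m)$.

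Because $B$ is diagonalizable with distinct eigenvalues, $\rank\hat{W}(D_n)$ equals the number of $m$ for which $\langle w^{(m)},C_0\rangle\neq 0$. Dirichlet summation gives
\[
\langle w^{(m)},C_0\rangle = \frac{\cos((n-2)\theta_m/2)\sin((n-1)\theta_m/2)}{\sin(\theta_m/2)},
\]
and $\sin((n-1)\theta_m/2) = \sin((2m+1)\pi/4) = \pm\sqrt{2}/2 \neq 0$, so vanishing reduces to $\cos((n-2)\theta_m/2)=0$. A short arithmetic check shows this has a unique solution $m = n/2-1$ precisely when $4\mid n$, proving (ii). For (i) I would observe that $e_{n-1}$ is always cyclic for $B$: an induction shows that $B^k e_{n-1}$ has topmost nonzero entry in row $n-1-k$ with coefficient $1$, so after reversing columns the matrix $K = [e_{n-1},Be_{n-1},\ldots,B^{n-2}e_{n-1}]$ is lower triangular with unit diagonal, whence $\det K = \pm 1$. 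Writing $C_0 = r_n(B)\,e_{n-1}$ for a unique $r_n \in \mathbb{Z}[x]$ of degree $n-2$ yields $\hat{W}(D_n) = r_n(B)\,K$ and thus $|\det\hat{W}(D_n)| = \prod_m|r_n(\lambda_m)|$.

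Working out the linear system explicitly and inducting on $n$, the polynomial $r_n$ satisfies the inhomogeneous Chebyshev-type recurrence $r_{n+1} = x r_n - r_{n-1} + 1$, which solves to
\[
r_n(2\cos\theta) = \frac{\sin(n\theta/2)\sin((n-1)\theta/2)}{\sin(\theta/2)\sin\theta}.
\]
The main obstacle will be the resulting trigonometric product. The factors $\sin((n-1)\theta_m/2) = \pm\sqrt{2}/2$ are immediate, and $\prod_m\sin(\theta_m/2)$ and $\prod_m\sin\theta_m$ are explicit powers of $2$ via the classical identities $\prod_{m=0}^{N-1}\sin((2m+1)\pi/(2N)) = 2^{1-N}$ and its $4N$-analogue $\prod_{m=0}^{N-1}\sin((2m+1)\pi/(4N)) = \sqrt{2}/2^{N}$. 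The delicate piece is $\prod_m|\sin(n\theta_m/2)|$, which I would evaluate by writing $n\theta_m/2 = \theta_m/2 + (2m+1)\pi/4$, reducing modulo $2\pi$ according to $m\bmod 4$, and recognizing the resulting expression as a product of cosines at equally spaced angles given by the cyclotomic evaluation of $\prod_{j=1}^{n-1}\cos(2j\pi/n)$ (a signed power of $2$ when $4\nmid n$, and $0$ when $4\mid n$, consistent with (ii)). Collecting exponents then gives $|\det\hat{W}(D_n)| = 2^{\lfloor n/2\rfloor - 1}$, completing (i).
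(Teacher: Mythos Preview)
Your argument for part~(\rmnum{2}) is correct and is essentially the paper's: you identify $\hat{W}(D_n)=W(B)$, diagonalize $B$ via its left eigenvectors $w^{(m)}$ with entries $\cos((j-1)\theta_m)$, and count the indices with $\langle w^{(m)},e\rangle\neq 0$. The paper does exactly this (its Lemma~\ref{eigB} and the second assertion of Lemma~\ref{relWA}), and your arithmetic check that $\cos\frac{n-2}{2}\theta_m=0$ has a unique solution precisely when $4\mid n$ is the content of Claim~3 in the paper's Lemma~\ref{etxi}.

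For part~(\rmnum{1}) you take a genuinely different route. The paper uses the Vandermonde-type identity
\[
\det W(B)=\frac{\prod_{k<j}(\lambda_j-\lambda_k)\,\prod_j e^\T\xi_j}{\det[\xi_1,\ldots,\xi_{n-1}]},
\]
and then evaluates the three pieces separately via the discriminant of $T_{n-1}$, an orthogonality computation, and a trigonometric product. Your factorization $W(B)=r_n(B)K$ with $|\det K|=1$ is a nice alternative: it avoids the discriminant and the orthogonality lemma entirely, replacing all three factors by the single product $\prod_m r_n(\lambda_m)$. Your formula $r_n(2\cos\theta)=\dfrac{\sin(n\theta/2)\sin((n-1)\theta/2)}{\sin(\theta/2)\sin\theta}$ is correct (indeed $r_n(x)=\sum_{k=0}^{n-2}U_k(x/2)$, and the recurrence you state checks out).

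The gap is in the evaluation of the ``delicate piece'' $\prod_{m=0}^{n-2}|\sin(n\theta_m/2)|$. Your proposed reduction to $\prod_{j=1}^{n-1}\cos(2j\pi/n)$ is not correct as stated. For $n=5$ one computes $\prod_{m=0}^{3}|\sin(5\theta_m/2)|=2^{-7/2}$, whereas $\prod_{j=1}^{4}\cos(2j\pi/5)=2^{-4}$; the two do not match, and no uniform reindexing produces angles of the form $2j\pi/n$ from the angles $n(2m+1)\pi/(4(n-1))$, because the denominator $4(n-1)$ never collapses to~$n$. The decomposition $n\theta_m/2=\theta_m/2+(2m+1)\pi/4$ does split the product according to $m\bmod 2$ into products of $\sin\bigl(\frac{(2m+n)\pi}{4(n-1)}\bigr)$ and $\cos\bigl(\frac{(2m+n)\pi}{4(n-1)}\bigr)$, but these are not at equally spaced points of the form $2j\pi/n$, and a further case split on $n\bmod 4$ is still needed. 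This is exactly where the paper also has to work: its Claims~1 and~2 in Lemma~\ref{etxi} evaluate the analogous product $\prod_j\cos\frac{n-2}{2}\alpha_j$ separately for $n\equiv 2\pmod 4$ (via a complete residue system modulo $4m+1$) and for $n$ odd (via a complete residue system of odd numbers modulo $8m$). Your approach would require the same kind of residue-class bookkeeping; the shortcut you propose does not exist.
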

For any integral $n\times n$ matrix $M$, there exist two unimodular matrices $U$ and $V$ such that $UMV$ is a diagonal matrix $\diag[d_1,d_2,\ldots,d_n]$ with $d_i\mid d_{i+1}$ for $i=1,2,\ldots,n-1$.
The diagonal matrix $\diag[d_1,d_2,\ldots,d_n]$ is called the \emph{Smith normal form} of $M$ and $d_i$ is called the $i$-th invariant factor of $M$. Note that the Smith normal form is a refinement of the determinant and the rank since $\det M=\pm d_1d_2\cdots d_n$ and $\rank\,M=\max\{i\colon\,d_i\neq 0\}$.

We shall see that the $n\times n$ matrix $W(D_n)$ and the $(n-1)\times (n-1)$ submatrix $\hat{W}(D_n)$ have almost the same Smith normal form. These two matrices have the same $i$-th invariant for $i=1,2,\ldots,n-1$ and the $n$-th (the last) invariant of $W(D_n)$ is zero. Thus we can easily determine $\rank\, W(D_n)$ by Theorem \ref{mainthm}:
\begin{equation}\label{rkW}
\rank\,W(D_n) =
\begin{cases}
n-1  &  \text{if $4\nmid n$,} \\
n-2
&  \text{if $4\mid n$.}
\end{cases}
\end{equation}
For the case $4\nmid n$, we can obtain the Smith normal form of $W(D_n)$ by Theorem \ref{mainthm}(\rmnum{1}).
\begin{theorem}\label{snfd}
	If $4\nmid n$ then $W(D_n)$  has the Smith normal form $$\diag[\underbrace{1,1,\ldots,1}_{\lceil\frac{n}{2}\rceil},\underbrace{2,2,\ldots,2}_{\lfloor\frac{n}{2}\rfloor-1},0].$$
\end{theorem}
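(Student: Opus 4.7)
The plan is to deduce the Smith normal form of $W(D_n)$ by three reductions, culminating in a rank computation modulo $2$. Throughout I assume $4\nmid n$.

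\emph{Reduction 1: from $W(D_n)$ to $\hat{W}(D_n)$.} In $D_n$, the two pendant vertices attached to the unique degree-$3$ vertex are twins, producing two identical rows of $W(D_n)$; a unimodular row operation then kills one of them. By Theorem~\ref{mainthm}(\rmnum{1}), $\hat{W}(D_n)$ is nonsingular, so $\rank\,W(D_n)=n-1$, and therefore the minimal polynomial $\mu(x)$ of $A(D_n)$ at $e$ has degree exactly $n-1$. Being a monic divisor of the characteristic polynomial of $A(D_n)$, we have $\mu(x)\in\mathbb{Z}[x]$ by Gauss's lemma; hence $\mu(A)e=0$ expresses $A^{n-1}e$ as an integer combination of $A^0e,\ldots,A^{n-2}e$, and this gives a unimodular column operation that zeros out the last column of $W(D_n)$. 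After both operations, the only nonzero block of the transformed matrix is $\hat{W}(D_n)$, whence the Smith normal form of $W(D_n)$ is $\diag[d_1,\ldots,d_{n-1},0]$, where $d_1\mid\cdots\mid d_{n-1}$ are the invariant factors of $\hat{W}(D_n)$.

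\emph{Reduction 2: determinant constraint.} Theorem~\ref{mainthm}(\rmnum{1}) gives $\prod d_i=2^{\lfloor n/2\rfloor-1}$, so every $d_i$ is a power of $2$. The number of $d_i$ equal to $1$ equals $\rank_{\mathbb{F}_2}\,\hat{W}(D_n)$. Provided this rank equals $\lceil n/2\rceil$, the remaining $\lfloor n/2\rfloor-1$ invariants are even powers of $2$ with product $2^{\lfloor n/2\rfloor-1}$; each being at least $2$, they must all equal $2$, producing the claimed Smith normal form.

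\emph{Main obstacle: $\rank_{\mathbb{F}_2}\,\hat{W}(D_n)=\lceil n/2\rceil$.} Equivalently, I need the minimal polynomial of $A(D_n)$ at $e$ over $\mathbb{F}_2$ to have degree $\lceil n/2\rceil$. My plan is to analyze the sequence $\{A^ke\bmod 2\}_{k\ge 0}$ directly. Small cases ($n=5$ yields annihilating polynomial $x^3$, and $n=6$ yields $x^3+x^2+x$) indicate that the polynomial depends on $n\bmod 4$, so a short case analysis is likely required. For the upper bound I would exhibit an explicit degree-$\lceil n/2\rceil$ polynomial in $\mathbb{F}_2[x]$ annihilating $e$, leveraging the second-order recurrence that the path portion of $D_n$ imposes on the walk-counts together with the twin-pendant symmetry. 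For the lower bound I expect a triangular pattern at the tail end of the path (where walks are constrained enough that the leading nonzero coordinate of $A^ke\bmod 2$ advances by one with each step) to guarantee linear independence of $e,Ae,\ldots,A^{\lceil n/2\rceil-1}e$ modulo $2$.
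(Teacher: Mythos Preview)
Your Reductions 1 and 2 match the paper's argument (its Lemma~\ref{equsnf} plus the determinant constraint from Theorem~\ref{mainthm}(\rmnum{1})). The divergence is at your ``main obstacle'': you set out to prove the \emph{equality} $\rank_{\mathbb{F}_2}\hat{W}(D_n)=\lceil n/2\rceil$, splitting into an upper and a lower bound, whereas the paper only uses the \emph{upper bound} $\rank_{\mathbb{F}_2}W(D_n)\le\lceil n/2\rceil$. The point you are missing is that your own counting in Reduction~2 already forces equality from the upper bound alone: if $r\le\lceil n/2\rceil$, then there are $n-1-r\ge\lfloor n/2\rfloor-1$ invariant factors that are at least $2$, and since their product is exactly $2^{\lfloor n/2\rfloor-1}$ there must be exactly $\lfloor n/2\rfloor-1$ of them, each equal to $2$; this simultaneously pins down $r=\lceil n/2\rceil$. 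So your proposed ``lower bound'' step (the triangular-pattern argument) is unnecessary. Moreover, the paper obtains the upper bound not by any computation specific to $D_n$ but by quoting a general fact (Lemma~\ref{rk2}): $\rank_{\mathbb{F}_2}W(G)\le\lceil n/2\rceil$ for \emph{every} $n$-vertex graph $G$. This completely bypasses the case analysis on $n\bmod 4$ and the explicit annihilating polynomials you anticipate. Your route would work if carried to completion, but it is strictly more laborious; the paper's route buys you a one-line finish once Theorem~\ref{mainthm}(\rmnum{1}) is in hand.
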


 An eigenvalue of $G$ (i.e. of $A$) is said to be \emph{main} if $A$ has an associated eigenvector which is not orthogonal to $e$. It is known that the number of main eigenvalues of $G$ equals the rank of $W(G)$; see \cite{hagos2002}. Thus, the following result on the number of main eigenvalues of $D_n$ is an immediate consequence of \eqref{rkW}.
 \begin{corollary}
 	The number of main eigenvalues of $D_n$ is $n-1$ if $4\nmid n$, and is $n-2$ otherwise.
 \end{corollary}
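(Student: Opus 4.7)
The plan is to reduce the statement to the rank formula \eqref{rkW} by invoking the well-known correspondence between main eigenvalues and the walk matrix. First, I would recall the theorem of Hagos \cite{hagos2002}: for any simple graph $G$ on $n$ vertices, the number of main eigenvalues of $G$ equals $\rank W(G)$. This identifies the spectral quantity of interest with a purely linear-algebraic one that has already been computed in the paper.

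Second, I would substitute the explicit value of $\rank W(D_n)$ from \eqref{rkW}: the rank equals $n-1$ when $4\nmid n$ and $n-2$ when $4\mid n$. This immediately yields the dichotomy asserted by the corollary, so the proof is a one-line substitution once the two ingredients are lined up.

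The main obstacle is not this corollary but the upstream results that make the substitution legitimate. Establishing \eqref{rkW} requires Theorem \ref{mainthm} together with the Smith-normal-form observation, recorded just before Theorem \ref{snfd}, that $W(D_n)$ and $\hat{W}(D_n)$ share the same first $n-1$ invariant factors and that the $n$-th invariant factor of $W(D_n)$ vanishes. Once those facts and Hagos's theorem are granted, no further argument is needed; in particular, there is no case where the corollary requires a genuinely different approach from the $\hat{W}(D_n)$ analysis already carried out. For this reason I would present the proof simply as: apply \cite{hagos2002} and insert \eqref{rkW}.
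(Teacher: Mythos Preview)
Your proposal is correct and matches the paper's own treatment exactly: the corollary is stated as an immediate consequence of \eqref{rkW} together with Hagos's result \cite{hagos2002} that the number of main eigenvalues equals $\rank\,W(G)$. No additional argument is needed or given in the paper.
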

	\section{Preliminaries}
	We use the method of equitable partitions of graphs. We refer to \cite{cvetkovic2010} for relevant terminologies.  Let $\Pi=\{\{1,2\},\{3\},\{4\},\ldots,\{n\}\}$ be a partition of $V(D_n)$ which is clearly equitable,  and let
	$$\small{C=\begin{pmatrix}
		1&{}&{}&{}&{}\\
		1&{}&{}&{}&{}\\
		{}&1&{}&{}&{}\\
		{}&{}&1&{}&{}\\
		{}&{}&{}&\ddots&{}\\
		{}&{}&{}&{}&1
		\end{pmatrix}}_{n\times(n-1)}$$ be the characteristic matrix corresponding to $\Pi$. The divisor matrix of $\Pi$ is
	$$\small{B=\begin{pmatrix}
		0&1&{}&{}&{}\\
		2&0&1&{}&{}\\
		{}&1&\ddots&\ddots&{}\\
		{}&{}&\ddots&\ddots&1\\
		{}&{}&{}&1&0\\
		\end{pmatrix}}_{(n-1)\times (n-1)}.$$
	For convenience, for an $m\times m$ matrix $M$, we define $W(M)=[e_m,Me_m,\ldots,M^{m-1}e_m]$, where the subscript $m$ refers to the number of entries of the all-ones vector $e_m$. We shall call $W(M)$ the walk matrix of $M$.
	
	The following simple fact is the starting point of our argument.
	\begin{lemma}\label{cofdet}
		$\hat{W}(D_n)=W(B).$
	\end{lemma}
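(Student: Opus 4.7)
The plan is to exploit the equitable partition to transfer walks in $D_n$ to walks in its quotient governed by the divisor matrix $B$. By the defining property of an equitable partition one has $AC=CB$, and iterating this gives $A^kC=CB^k$ for every $k\ge 0$. The all-ones vector is constant on each cell of $\Pi$, so $e_n=Ce_{n-1}$, and combining the two identities yields
\[
A^k e_n \;=\; A^k C e_{n-1} \;=\; C B^k e_{n-1}\qquad (k\ge 0).
\]

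Stacking these $n$ columns I would obtain $W(D_n)=C\widetilde W$, where $\widetilde W$ is the $(n-1)\times n$ matrix $[e_{n-1},Be_{n-1},\ldots,B^{n-1}e_{n-1}]$ whose first $n-1$ columns comprise exactly $W(B)$. Now the explicit shape of $C$ tells us how the left-multiplication acts on rows: rows $1$ and $2$ of $W(D_n)$ are both copies of row $1$ of $\widetilde W$ (this reflects that vertices $1$ and $2$ form a single twin cell), and for $j\ge 3$ row $j$ of $W(D_n)$ equals row $j-1$ of $\widetilde W$.

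Therefore deleting the first row of $W(D_n)$ leaves exactly rows $1,2,\ldots,n-1$ of $\widetilde W$, i.e.\ the whole of $\widetilde W$. Then deleting the last column of $\widetilde W$ strips off $B^{n-1}e_{n-1}$ and leaves the $(n-1)\times(n-1)$ matrix $[e_{n-1},Be_{n-1},\ldots,B^{n-2}e_{n-1}] = W(B)$. This is the desired identity $\hat W(D_n)=W(B)$.

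There is no real obstacle here; the lemma is essentially a bookkeeping consequence of the quotient identity $A^kC=CB^k$. The only point that needs care is to correctly match the row-duplication pattern coming from the twin cell $\{1,2\}$ with the choice to delete the first (rather than the second) row in the definition of $\hat W(D_n)$, and to observe that the removal of the last column is exactly what is needed to turn the $(n-1)\times n$ matrix $\widetilde W$ into the square walk matrix $W(B)$.
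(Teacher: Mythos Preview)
Your proof is correct and follows essentially the same approach as the paper: the paper also uses $AC=CB$, hence $A^kC=CB^k$, together with $Ce_{n-1}=e_n$, and then observes that deleting the first row of $C$ yields the identity. The only cosmetic difference is that the paper works directly with the first $n-1$ columns $[e_n,Ae_n,\ldots,A^{n-2}e_n]=CW(B)$ rather than introducing the intermediate $n$-column matrix $\widetilde W$ and then dropping its last column.
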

	\begin{proof}
		Let $A=A(D_n)$ be the adjacency matrix of $D_n$. Then we have $AC=CB$ and hence $A^k C=CB^k$ for any $k\ge 0$. Noting that $Ce_{n-1}=e_n$, we have $A^k e_n=CB^ke_{n-1}$ and hence
		$$[e_n,Ae_n,\ldots,A^{n-2}e_n]=CW(B).$$
		Note that removing the first row from $[e_n,Ae_n,\ldots,A^{n-2}e_n]$ and $C$ results in $\hat{W}(D_n)$ and the identity matrix, respectively. Thus, $\hat{W}(D_n)=W(B)$, as desired.	
	\end{proof}
	The following lemma is essentially due to Mao, Liu and Wang~\cite{mao2015LAA}.
	\begin{lemma}\label{relWA}
		Let $M$ (and hence $M^\T$) be a real $m\times m$  matrix which is diagonalizable over the real field $\mathbb{R}$. Let  $\xi_1,\xi_2,\ldots,\xi_m$ be  $m$ independent eigenvectors of $M^\T$ corresponding to  eigenvalues $\lambda_1,\lambda_2,\ldots,\lambda_m$, respectively. Then we have
		$$\det W(M)=\frac{\prod_{1\le k<j\le m} (\lambda_j-\lambda_k)\prod_{j=1}^m e^\T\xi_j}{\det[\xi_1,\xi_2,\ldots,\xi_m]}.$$
		Moreover, if $\lambda_1,\lambda_2,\ldots, \lambda_m$ are pairwise different, then $\rank\,W(M)=|\{j\colon\,1\le j\le m ~and~e^\T \xi_j\neq 0\}|$.
	\end{lemma}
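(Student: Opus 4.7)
The strategy is to diagonalize $M^\T$ via the supplied eigenbasis and recognize the walk matrix as a product containing a Vandermonde factor. I would set $P:=[\xi_1,\xi_2,\ldots,\xi_m]$ and $D:=\diag[\lambda_1,\lambda_2,\ldots,\lambda_m]$; the hypothesis reads $M^\T P=PD$, and since the $\xi_j$ are linearly independent, $P$ is invertible, so $M^\T=PDP^{-1}$ and therefore $M^k=(P^\T)^{-1}D^kP^\T$ for every integer $k\ge 0$.

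Next I would apply this identity to each column of $W(M)$. Writing $y:=P^\T e$, so that the $j$-th entry of $y$ is exactly $e^\T\xi_j$, the $(k{+}1)$-st column of $W(M)$ is $M^ke=(P^\T)^{-1}D^ky$. Assembling the columns yields
\[ W(M)=(P^\T)^{-1}\cdot\diag(y_1,y_2,\ldots,y_m)\cdot V, \]
where $V$ is the Vandermonde-type matrix with $(j,k)$-entry $\lambda_j^{k-1}$. Taking determinants and using the classical identity $\det V=\prod_{1\le k<j\le m}(\lambda_j-\lambda_k)$, together with $\det(P^\T)^{-1}=1/\det[\xi_1,\ldots,\xi_m]$, delivers the stated formula at once.

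For the moreover statement, when the $\lambda_j$ are pairwise distinct the Vandermonde factor $V$ is invertible, and $(P^\T)^{-1}$ is invertible automatically; consequently $\rank\,W(M)$ equals the rank of $\diag(y_1,\ldots,y_m)$, which is precisely the number of indices $j$ with $e^\T\xi_j\ne 0$.

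I do not anticipate a serious obstacle: the entire argument rests on the single factorization displayed above, after which the Vandermonde identity finishes everything. The only point requiring attention is consistent bookkeeping of transposes, since the given eigenvectors belong to $M^\T$ rather than to $M$; this is what forces the factor $(P^\T)^{-1}$ (rather than $P$) on the left and places $P^\T e$ inside, producing the scalars $e^\T\xi_j$ in the final formula.
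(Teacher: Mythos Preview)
Your proof is correct and is essentially the same as the paper's: both establish the identity $W(M)^\T[\xi_1,\ldots,\xi_m]=\text{(Vandermonde)}\cdot\diag(e^\T\xi_1,\ldots,e^\T\xi_m)$ and then read off the determinant and rank. The only cosmetic difference is that the paper verifies this identity entrywise by computing $e^\T(M^\T)^s\xi_j=\lambda_j^s e^\T\xi_j$, whereas you first diagonalize $M$ via $P$ and then rearrange; transposing your factorization $P^\T W(M)=\diag(y)V$ recovers exactly the paper's equation.
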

	\begin{proof}
		Write $W=W(M)$ and $e=e_m$. For each $s\in\{1,2,\ldots,m-1\}$, we have
		$$e^\T (M^\T)^s(\xi_1,\ldots,\xi_m)=(\lambda_1^se^\T \xi_1,\ldots,\lambda_m^se^\T\xi_m)=(\lambda_1^s,\ldots,\lambda_m^s)\left(\begin{matrix}
		e^\T\xi_1&&\\
		&\ddots&\\
		&&e^\T\xi_m\\
		\end{matrix}
		\right),$$
		and hence
		$$
		W^\T(\xi_1,\ldots,\xi_m)=\left(\begin{matrix} e^\T\\e^\T M^\T\\ \vdots\\e^\T (M^\T)^{m-1}\\\end{matrix}\right)(\xi_1,\ldots,\xi_m)=\left(\begin{matrix} 1&\ldots&1\\\lambda_1&\ldots&\lambda_m\\ \vdots&\cdots&\vdots\\ \lambda_1^{m-1}&\ldots&\lambda_m^{m-1}\\\end{matrix}\right)\left(\begin{matrix}
		e^\T\xi_1&&\\
		&\ddots&&\\
		&&e^\T\xi_m\\
		\end{matrix}
		\right).
		$$
		Therefore, $\det W^\T\det [\xi_1,\ldots,\xi_m]=\prod_{1\le k<j\le m} (\lambda_j-\lambda_k)\prod_{j=1}^m e^\T\xi_j.$
		This proves the first assertion as $\det W^\T=\det W$. Furthermore, assuming that all $\lambda_j$'s are pairwise different, we have $\rank\, W^\T=\rank\,\diag[e^\T\xi_1,\ldots,e^\T \xi_m]$ and hence the second assertion follows.
	\end{proof}
\section{Chebyshev polynomials and some identities}
	Let $T_n(x)$ and $U_n(x)$ be the Chebyshev polynomials of the first and the second kind respectively. 	The \emph{discriminant} of a polynomial $P(x)$ of degree $n$ and leading coefficient $a_n$ is
	\begin{equation}\label{discP}
	\Disc_x P(x) = a_n^{2n-2}\prod_{1\le k<j\le n}(r_j-r_k)^2,
	\end{equation}
	where $r_1,r_2,\ldots,r_n$ are the roots of $P(x)$.

	The following  lemma summarizes some basic properties of $T_n(x)$ and $U_n(x)$ that will be used in this paper.
	\begin{lemma}[\cite{rivlin}]\label{basiccheb}
	Let $n\ge 1$. Then $T_n(x)$ and $U_n(x)$ have the following properties:\\	
	\textup{(\rmnum{1})} The roots of $T_n(x)$ and $U_n(x)$ are $\{\cos\frac{2j-1}{2n}\pi\colon\, j=1,\ldots,n\}$ and $\{\cos\frac{j}{n+1}\pi\colon\,j=1,\ldots,n\}$, respectively;\\
	\textup{(\rmnum{2})} The leading coefficients of $T_n(x)$ and $U_n$  are $2^{n-1}$ and $2^n$, respectively;\\
	\textup{(\rmnum{3})} $T_n(x)$ and $U_n(x)$	have the same constant term $b_n$, where
		$$b_n=\begin{cases}
	(-1)^{\frac{n}{2}}&\text{~if~$n$~is even},\\
	0&\text{~if~$n$~is odd}.
	\end{cases}
	$$
	\textup{(\rmnum{4})} $\Disc_x T_n(x)=2^{(n-1)^2}n^{n}.$
		\end{lemma}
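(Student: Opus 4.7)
My plan is to dispatch parts (i)--(iii) by direct, standard arguments and to reduce part (iv) to a single sine-product identity.

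For (i), I would substitute $x=\cos\theta$ with $\theta\in[0,\pi]$ and use the defining identities $T_n(\cos\theta)=\cos(n\theta)$ and $\sin\theta\cdot U_n(\cos\theta)=\sin((n+1)\theta)$; reading off the zeros in $(0,\pi)$ gives the two stated lists of roots. For (ii), an induction on $n$ using the common Chebyshev recurrence $P_{n+1}=2xP_n-P_{n-1}$ with initial data $T_0=1,T_1=x,U_0=1,U_1=2x$ immediately yields the leading coefficients $2^{n-1}$ and $2^n$. For (iii), evaluating the trigonometric formulas at $\theta=\pi/2$ gives $T_n(0)=\cos(n\pi/2)$ and $U_n(0)=\sin((n+1)\pi/2)$, which are visibly $0$ when $n$ is odd and $(-1)^{n/2}$ when $n$ is even.

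For (iv), I would start from the resultant expression for the discriminant,
\[
\Disc_x T_n(x)=\frac{(-1)^{n(n-1)/2}}{a_n}\,\mathrm{Res}(T_n,T_n'),
\]
and apply the classical identity $T_n'(x)=n\,U_{n-1}(x)$ together with $a_n=2^{n-1}$ from (ii). Writing $\mathrm{Res}(T_n,U_{n-1})=a_n^{n-1}\prod_{j=1}^n U_{n-1}(r_j)$ with $r_j=\cos\frac{(2j-1)\pi}{2n}$ from (i), and using $U_{n-1}(\cos\theta)=\sin(n\theta)/\sin\theta$ to evaluate $U_{n-1}(r_j)=(-1)^{j-1}/\sin\frac{(2j-1)\pi}{2n}$, the whole claim collapses to the purely trigonometric identity
\[
\prod_{j=1}^n\sin\frac{(2j-1)\pi}{2n}=\frac{1}{2^{n-1}}.
\]

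The one genuinely computational step, which I expect to be the main (but still routine) obstacle, is this sine product. My plan is to split the known identity $\prod_{k=1}^{2n-1}\sin\frac{k\pi}{2n}=\frac{2n}{2^{2n-1}}$ (the standard $\prod_{k=1}^{m-1}\sin\frac{k\pi}{m}=\frac{m}{2^{m-1}}$ with $m=2n$) according to the parity of $k$: the even-indexed factors form $\prod_{j=1}^{n-1}\sin\frac{j\pi}{n}=\frac{n}{2^{n-1}}$, and dividing shows the odd-indexed factors multiply to $1/2^{n-1}$. Substituting back and collecting the powers of $2$, together with the fact that the two sign factors $(-1)^{n(n-1)/2}$ cancel, produces exactly $\Disc_x T_n(x)=2^{(n-1)^2}n^n$, as required.
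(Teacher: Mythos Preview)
Your argument is correct. Parts (i)--(iii) are handled by the obvious trigonometric/recurrence checks, and for (iv) your chain of reductions is sound: the discriminant--resultant identity, $T_n'=nU_{n-1}$, the evaluation $U_{n-1}(\cos\theta_j)=(-1)^{j-1}/\sin\theta_j$ at $\theta_j=\frac{(2j-1)\pi}{2n}$, and the parity split of $\prod_{k=1}^{2n-1}\sin\frac{k\pi}{2n}$ all go through, and the bookkeeping of powers of $2$ and the two factors $(-1)^{n(n-1)/2}$ does indeed collapse to $2^{(n-1)^2}n^n$.

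As for comparison with the paper: there is nothing to compare. The paper does not prove this lemma; it is stated with a citation to Rivlin's monograph and used as a black box. Your write-up therefore supplies a self-contained proof where the paper simply imports a reference. The only thing worth double-checking before inclusion is that you are using the same normalization of the discriminant as the paper (it is, since the paper's Eq.~(\ref{discP}) matches the standard convention your resultant formula presupposes).
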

	\begin{lemma}
		\label{forfirst}
		\begin{equation}\label{fromdiscT}
		\prod_{1\le k<j\le m}\left(2\cos\frac{(2j-1)\pi}{2m}-2\cos\frac{(2k-1)\pi}{2m}\right)=\pm2^\frac{m-1}{2}m^{\frac{m}{2}}, ~\text{for}~m\ge 1.
		\end{equation}
	\end{lemma}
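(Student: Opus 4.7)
The plan is to derive the identity \eqref{fromdiscT} directly from the discriminant formula for the Chebyshev polynomial $T_m(x)$ recorded in Lemma \ref{basiccheb}. By part (\rmnum{1}), the numbers $r_j:=\cos\frac{(2j-1)\pi}{2m}$, $j=1,\ldots,m$, are exactly the $m$ roots of $T_m$, so the left-hand side of \eqref{fromdiscT} is $2^{m(m-1)/2}\prod_{1\le k<j\le m}(r_j-r_k)$. Up to a sign, this is a power of $2$ times the square root of $\prod_{k<j}(r_j-r_k)^2$, which is precisely what the discriminant computes.

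Concretely, I would substitute the leading coefficient from Lemma \ref{basiccheb}(\rmnum{2}) and the value of $\Disc_x T_m(x)$ from Lemma \ref{basiccheb}(\rmnum{4}) into the definition \eqref{discP} to obtain
\[
\prod_{1\le k<j\le m}(r_j-r_k)^2=\frac{\Disc_x T_m(x)}{(2^{m-1})^{2m-2}}=\frac{2^{(m-1)^2}m^m}{2^{2(m-1)^2}}=\frac{m^m}{2^{(m-1)^2}}.
\]
Taking a square root (with an as-yet-undetermined sign) yields $\prod_{k<j}(r_j-r_k)=\pm m^{m/2}/2^{(m-1)^2/2}$. Multiplying through by the $2^{m(m-1)/2}$ leading constants collapses the $2$-exponent via
\[
\frac{m(m-1)}{2}-\frac{(m-1)^2}{2}=\frac{(m-1)(m-(m-1))}{2}=\frac{m-1}{2},
\]
and the stated identity \eqref{fromdiscT} falls out immediately.

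There is no substantive obstacle; the lemma is essentially a book-keeping exercise once the Chebyshev data have been assembled in Lemma \ref{basiccheb}. The one minor nuisance worth flagging is that when $m$ is even, both $(m-1)/2$ and $(m-1)^2/2$ are half-integers, so the square root step introduces an irrational factor and the $\pm$ sign cannot be pinned down by this argument alone. Since only the magnitude of the product enters the later applications of \eqref{fromdiscT}, this ambiguity is harmless and the $\pm$ in the statement absorbs it.
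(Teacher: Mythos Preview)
Your argument is correct and matches the paper's proof essentially line for line: both substitute the leading coefficient $2^{m-1}$ and the roots $\cos\frac{(2k-1)\pi}{2m}$ of $T_m$ into the discriminant definition \eqref{discP}, invoke $\Disc_x T_m(x)=2^{(m-1)^2}m^m$ from Lemma~\ref{basiccheb}(\rmnum{4}), and then take a square root to recover \eqref{fromdiscT}. Your additional remark about the half-integer exponents when $m$ is even is a nice observation but not needed for the lemma as stated.
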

	\begin{proof}
		The leading coefficient of $T_m(x)$ is $2^{m-1}$ and the roots are $\cos\frac{(2k-1)\pi}{2m}$, $k=1,2,\ldots,m$. Thus, by  \eqref{discP} and Lemma \ref{basiccheb}(\rmnum{4}), we have
		\begin{equation}
		2^{(m-1)(2m-2)}	\prod_{1\le k<j\le m}\left(\cos\frac{(2j-1)\pi}{2m}-\cos\frac{(2k-1)\pi}{2m}\right)^2=2^{(m-1)^2} m^m,
		\end{equation}
		which implies \eqref{fromdiscT}.
	\end{proof}
	The following two formulas are direct consequences of Vieta's theorem for $T_{4m}(x)$ and $U_{4m}(x)$.
	\begin{lemma}\label{pc8m}
		$$\prod_{j=1}^{4m}\cos\frac{2j-1}{8m}\pi=2^{-4m+1}.$$
	\end{lemma}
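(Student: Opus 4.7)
The plan is to read the product as the product of the roots of $T_{4m}(x)$ and apply Vieta's formula. By Lemma \ref{basiccheb}(\rmnum{1}) with $n=4m$, the roots of $T_{4m}(x)$ are precisely $\cos\frac{(2j-1)\pi}{8m}$ for $j=1,2,\ldots,4m$, so the product on the left-hand side of the claim is exactly $\prod_{j=1}^{4m} r_j$, where $r_1,\ldots,r_{4m}$ denote these roots.

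To evaluate this product I would extract the two relevant coefficients of $T_{4m}(x)$ from Lemma \ref{basiccheb}. Part (\rmnum{2}) gives leading coefficient $2^{4m-1}$, and part (\rmnum{3}) gives constant term $b_{4m}=(-1)^{(4m)/2}=(-1)^{2m}=1$ since $4m$ is even. Writing $T_{4m}(x)=2^{4m-1}\prod_{j=1}^{4m}(x-r_j)$ and evaluating at $x=0$ yields $b_{4m}=2^{4m-1}(-1)^{4m}\prod_{j=1}^{4m} r_j$, so
$$\prod_{j=1}^{4m}\cos\frac{(2j-1)\pi}{8m}=\prod_{j=1}^{4m}r_j=\frac{(-1)^{4m}b_{4m}}{2^{4m-1}}=\frac{1}{2^{4m-1}}=2^{-4m+1},$$
which is the identity claimed.

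There is essentially no obstacle beyond bookkeeping of signs: both $(-1)^{4m}$ and $(-1)^{2m}$ equal $+1$ because $4\mid 4m$, so all sign factors combine to $+1$ and the product is a positive number, consistent with every factor $\cos\frac{(2j-1)\pi}{8m}$ for $j=1,\ldots,2m$ being positive and being paired, via $j\leftrightarrow 4m+1-j$, with a negative factor of equal magnitude in exactly $m$ pairs, giving an overall sign of $(-1)^{m}$ times $(-1)^m$ (after reindexing) equal to $+1$. Hence the lemma reduces to a one-line application of Vieta's formula once Lemma \ref{basiccheb} is in hand.
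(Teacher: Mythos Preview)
Your argument is correct and is exactly the approach the paper indicates: the paper states that the identity is a direct consequence of Vieta's theorem applied to $T_{4m}(x)$, and you have simply written out that computation using Lemma~\ref{basiccheb}(\rmnum{1})--(\rmnum{3}). The final paragraph's sign sanity check is unnecessary (and slightly miscounts the pairs---there are $2m$, not $m$), but this does not affect the proof, since the Vieta step already determines the sign unambiguously.
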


	\begin{lemma}\label{pc4mp1}
		$$	\prod_{j=1}^{4m}\cos\frac{j}{4m+1}\pi=2^{-4m}.$$
	\end{lemma}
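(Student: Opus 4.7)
The plan is to derive this identity as a direct application of Vieta's formulas to the Chebyshev polynomial $U_{4m}(x)$, exactly as indicated in the sentence introducing the lemma. All the necessary ingredients are already packaged in Lemma \ref{basiccheb}: the explicit list of roots, the leading coefficient, and the constant term.

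First I would recall from Lemma \ref{basiccheb}(\rmnum{1}) that the roots of $U_{4m}(x)$ are precisely the numbers $\cos\frac{j}{4m+1}\pi$ for $j=1,2,\ldots,4m$. By Vieta's formula, for a polynomial $a_nx^n+\cdots+a_0$ with roots $r_1,\ldots,r_n$ one has $\prod_{j=1}^n r_j=(-1)^n a_0/a_n$. Applied to $U_{4m}(x)$, this gives
\[
\prod_{j=1}^{4m}\cos\frac{j}{4m+1}\pi=(-1)^{4m}\,\frac{b_{4m}}{2^{4m}},
\]
where I have used Lemma \ref{basiccheb}(\rmnum{2}) for the leading coefficient $2^{4m}$ and Lemma \ref{basiccheb}(\rmnum{3}) for the constant term $b_{4m}$.

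It remains only to evaluate the right-hand side. Since $4m$ is even, part (\rmnum{3}) gives $b_{4m}=(-1)^{2m}=1$, and of course $(-1)^{4m}=1$. Substituting, the product equals $2^{-4m}$, as claimed.

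I do not foresee a serious obstacle; the only thing one has to be careful about is the sign bookkeeping, i.e.\ checking that both $(-1)^{4m}$ and $b_{4m}=(-1)^{2m}$ evaluate to $+1$ so that no minus sign sneaks into the final answer. Apart from this small point, the argument is a one-line corollary of the two parts of Lemma \ref{basiccheb} together with Vieta's theorem, and the proof of Lemma \ref{pc8m} (via $T_{4m}$) follows the same template, merely replacing the leading coefficient $2^{4m}$ by $2^{4m-1}$.
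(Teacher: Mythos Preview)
Your proof is correct and follows exactly the approach indicated in the paper, namely applying Vieta's theorem to $U_{4m}(x)$ using the roots, leading coefficient, and constant term recorded in Lemma~\ref{basiccheb}. The sign check you mention is the only thing to watch, and you have handled it correctly.
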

	\begin{lemma}\label{sumcos}
		$$\sum_{k=1}^{m}\cos(ak+b)x=\frac{1}{2\sin\frac{1}{2}ax}(\sin((m+\frac{1}{2})a+b)x-\sin(\frac{1}{2}a+b)x),~ \text{for}~ \sin \frac{1}{2}ax\neq 0.$$
		\end{lemma}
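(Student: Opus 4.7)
This is a classical Dirichlet-kernel-type identity and I would handle it by the standard multiply-and-telescope trick. The plan is to multiply both sides by $2\sin\frac{1}{2}ax$ (which is nonzero by hypothesis) and observe that the right-hand side collapses to two terms while the left-hand side becomes a telescoping sum.

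Concretely, I would apply the product-to-sum identity $2\sin A\cos B=\sin(A+B)+\sin(A-B)$ with $A=\frac{1}{2}ax$ and $B=(ak+b)x$ to each summand on the left, obtaining
$$2\sin\tfrac{1}{2}ax\cdot\cos(ak+b)x=\sin\bigl((a(k+\tfrac{1}{2})+b)x\bigr)-\sin\bigl((a(k-\tfrac{1}{2})+b)x\bigr),$$
where the minus sign comes from $\sin(A-B)=-\sin(B-A)$. Summing over $k=1,2,\ldots,m$, the right-hand side telescopes: the positive term at index $k$ cancels the negative term at index $k+1$, since both have argument $(a(k+\tfrac{1}{2})+b)x$. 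Only the final positive term (at $k=m$) and the first negative term (at $k=1$) survive, giving
$$2\sin\tfrac{1}{2}ax\sum_{k=1}^{m}\cos(ak+b)x=\sin\bigl((a(m+\tfrac{1}{2})+b)x\bigr)-\sin\bigl((\tfrac{1}{2}a+b)x\bigr).$$
Dividing by the nonzero factor $2\sin\frac{1}{2}ax$ yields the claimed formula.

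The identity presents no real obstacle; it is a routine exercise in trigonometric manipulation. The only items that need care are the sign convention in the product-to-sum step and the bookkeeping to confirm that the telescoping cancels correctly with the stated endpoints. As an alternative derivation, one may write $\cos\theta=\Re\,e^{i\theta}$, sum the resulting finite geometric series in $e^{iax}$, factor out $e^{iax/2}$ from numerator and denominator, and take real parts; this route produces the same closed form and is equally straightforward.
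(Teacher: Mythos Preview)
Your proof is correct and follows essentially the same approach as the paper: multiply through by $\sin\frac{1}{2}ax$, apply the product-to-sum identity to each summand, and telescope. The paper phrases the identity as $\cos\alpha\sin\beta=\tfrac{1}{2}(\sin(\alpha+\beta)-\sin(\alpha-\beta))$, but this is just your $2\sin A\cos B$ formula with the roles of $A$ and $B$ swapped, and the telescoping computation is identical.
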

	\begin{proof}
		Using the formula $\cos \alpha \sin \beta=\frac{1}{2}(\sin(\alpha+\beta)-\sin(\alpha-\beta))$, we have
		\begin{eqnarray*}
			\sum_{k=1}^{m}\cos(ak+b)x\sin \frac{1}{2}ax&=&\frac{1}{2}\sum_{k=1}^{m}\sin((k+\frac{1}{2})a+b)x-\sin((k-\frac{1}{2})a+b)x\\
			&=&\frac{1}{2}(\sin((m+\frac{1}{2})a+b)x-\sin(\frac{1}{2}a+b)x).
			\end{eqnarray*}
		This proves Lemma \ref{sumcos}.

	\end{proof}
	
	\begin{corollary}\label{prodsin}
		$$\prod_{j=1}^{m-1}\sin\frac{2k-1}{4(m-1)}\pi=2^{\frac{3}{2}-m}.$$
	\end{corollary}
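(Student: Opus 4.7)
Set $N = m-1$, so that the stated identity reads $\prod_{k=1}^{N}\sin\frac{(2k-1)\pi}{4N} = 2^{1/2-N}$. The plan is to convert the product of sines into a product of cosines and then read off the value from the factorization of $T_{2N}(x)$ supplied by Lemma \ref{basiccheb}.

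First I would apply the co-function identity $\sin\theta = \cos(\pi/2-\theta)$. Writing $\pi/2 - (2k-1)\pi/(4N) = (2(N-k+1)-1)\pi/(4N)$ and relabeling $j = N-k+1$ converts the target into $\prod_{j=1}^{N}\cos\frac{(2j-1)\pi}{4N}$, a product of positive numbers since every angle lies in $(0,\pi/2)$. Next, Lemma \ref{basiccheb}(\rmnum{1})(\rmnum{2}) yields the factorization
$$T_{2N}(x) \;=\; 2^{2N-1}\prod_{k=1}^{2N}\Bigl(x - \cos\frac{(2k-1)\pi}{4N}\Bigr),$$
and Lemma \ref{basiccheb}(\rmnum{3}) evaluates $T_{2N}(0) = (-1)^N$. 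Setting $x=0$ and using $(-1)^{2N}=1$ therefore gives
$$\prod_{k=1}^{2N}\cos\frac{(2k-1)\pi}{4N} \;=\; \frac{(-1)^N}{2^{2N-1}}.$$

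Finally I would fold this $2N$-term product in half using $\cos(\pi-\alpha) = -\cos\alpha$: the substitution $k \mapsto 2N+1-k$ shows that the upper half ($k=N+1,\ldots,2N$) equals $(-1)^N$ times the lower half, so the full product equals $(-1)^N$ times the square of the lower-half product. Cancelling the signs leaves $\bigl(\prod_{k=1}^{N}\cos\frac{(2k-1)\pi}{4N}\bigr)^{2} = 2^{1-2N}$, and taking the positive square root (legitimate since every factor is positive) gives $2^{1/2-N} = 2^{3/2-m}$, as required. There is no real obstacle here; the only delicate point is keeping track of the signs when carrying out the folding step.
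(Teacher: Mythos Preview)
Your proof is correct and follows essentially the same route as the paper: both convert between the sine and cosine products via $\sin\theta=\cos(\pi/2-\theta)$, evaluate the full $2N$-term cosine product from the factorization of $T_{2N}$ (the paper phrases this as ``Vieta's theorem for $T_{2(m-1)}$''), fold via $\cos(\pi-\alpha)=-\cos\alpha$ to obtain the square of the half-product, and then take the positive square root. The only cosmetic difference is the order of the sin--cos conversion (you do it first, the paper last).
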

\begin{proof}
	Using Vieta's theorem for $T_{2(m-1)}$, we have
	$$\prod_{j=1}^{2m-2}\cos\frac{2j-1}{4(m-1)}\pi=(-1)^{m-1}2^{3-2m}.$$
	Define $\sigma(j)=2m-1-j$. Using the formula $\cos(\pi-x)=-\cos(x)$, we  find that $\cos\frac{2\sigma(j)-1}{4(m-1)}\pi=-\cos\frac{2j-1}{4(m-1)}\pi$ and hence
		$$\prod_{j=1}^{2m-2}\cos\frac{2j-1}{4(m-1)}\pi=\prod_{j=1}^{m-1}\cos\frac{2j-1}{4(m-1)}\pi\cos\frac{2\sigma(j)-1}{4(m-1)}\pi=(-1)^{m-1}\prod_{j=1}^{m-1}\cos^2\frac{2j-1}{4(m-1)}\pi.$$
		Thus, we have
			$$\prod_{j=1}^{m-1}\cos\frac{2j-1}{4(m-1)}\pi=\pm 2^{\frac{3}{2}-m},$$
			where the sign `$\pm$' is indeed a `$+$' as each term in the product is positive. By the formula $\cos(x)=\sin(\frac{\pi}{2}-x)$, we have
			$$\prod_{j=1}^{m-1}\cos\frac{2j-1}{4(m-1)}\pi=\prod_{j=1}^{m-1}\sin\frac{2(m-j)-1}{4(m-1)}\pi=\prod_{j=1}^{m-1}\sin\frac{2j-1}{4(m-1)}\pi.$$
			This completes the proof.
\end{proof}

\section{Proofs of the main results}

\begin{lemma}\label{eigB}
	Let $\xi_k=(1,\cos\frac{(2k-1)\pi}{2(n-1)},\cos\frac{(2k-1)2\pi}{2(n-1)},\ldots,\cos\frac{(2k-1)(n-2)\pi}{2(n-1)})^\T$ and $\lambda_k=2\cos\frac{(2k-1)\pi}{2(n-1)}$ for $k=1,2,\ldots,n-1$. Then $B^\T \xi_k=\lambda_k\xi_k$.
\end{lemma}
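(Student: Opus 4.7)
\textbf{Proof plan for Lemma \ref{eigB}.} Write $\xi_k=(v_0,v_1,\ldots,v_{n-2})^\T$ with $v_j=\cos(j\theta_k)$ where $\theta_k=\frac{(2k-1)\pi}{2(n-1)}$, so that $\lambda_k=2\cos\theta_k$. From the definition of $B$, the transpose $B^\T$ is tridiagonal with a single asymmetry at the top: its first row is $(0,2,0,\ldots,0)$, its last row is $(0,\ldots,0,1,0)$, and every intermediate row $i$ (for $2\le i\le n-2$) has entries $1$ in positions $i-1$ and $i+1$ and zeros elsewhere. The plan is therefore to verify the eigenvector equation row by row, treating the top row, the interior rows, and the bottom row separately.

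For an interior index $i$ with $2\le i\le n-2$, the equation to check is
\[
(B^\T\xi_k)_i \;=\; v_{i-2}+v_i \;=\; \lambda_k v_{i-1}.
\]
This is exactly the product-to-sum identity
\[
\cos((i-2)\theta_k)+\cos(i\theta_k)=2\cos\theta_k\cos((i-1)\theta_k),
\]
so every interior row is handled by one application of the standard cosine recursion (equivalently, the Chebyshev three-term recurrence). This is the routine bulk of the verification.

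The two non-routine checks come at the boundary. At the top, $(B^\T\xi_k)_1=2v_1=2\cos\theta_k=\lambda_k\cdot v_0$, which matches because $v_0=1$; the factor of $2$ sitting in position $(1,2)$ of $B^\T$ is precisely what compensates for the missing term $v_{-1}$ and makes the recursion work without any assumption on $\theta_k$. At the bottom the equation reduces to
\[
(B^\T\xi_k)_{n-1}\;=\;v_{n-3}\;=\;\lambda_k v_{n-2},
\]
and applying the product-to-sum identity in reverse gives
\[
2\cos\theta_k\cos((n-2)\theta_k)=\cos((n-3)\theta_k)+\cos((n-1)\theta_k),
\]
so the required equality is equivalent to $\cos((n-1)\theta_k)=0$, that is, $\cos\frac{(2k-1)\pi}{2}=0$. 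This is where the specific choice $\theta_k=\frac{(2k-1)\pi}{2(n-1)}$ with an \emph{odd} numerator is essential: it produces the half-integer multiple of $\pi$ that kills the extra cosine term. I expect this boundary condition to be the only subtle point; once it is in place, the three computations combine to give $B^\T\xi_k=\lambda_k\xi_k$ for every $k=1,\ldots,n-1$, completing the proof.
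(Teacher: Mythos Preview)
Your proof is correct and follows essentially the same approach as the paper's: both use the product-to-sum identity $\cos((j-1)\theta)+\cos((j+1)\theta)=2\cos\theta\cos(j\theta)$ for the interior recurrence and then verify the two boundary rows via $v_0=1$ at the top and $\cos((n-1)\theta_k)=\cos\frac{(2k-1)\pi}{2}=0$ at the bottom. The only cosmetic difference is that the paper extends the sequence by auxiliary terms $a_0=\cos(-\theta)$ and $a_n=\cos((n-1)\theta)$ (noting $a_0=a_2$ and $a_n=0$) before reading off the matrix equation, whereas you check the three row-types directly.
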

\begin{proof}
We fix $k$ and	write $\theta=\frac{(2k-1)\pi}{2(n-1)}$. Define $\eta=(a_0,a_1,\ldots,a_{n})$ where $a_i=\cos (i-1)\theta$ for $i=0,1,\ldots,n$. By the formula
$\cos\alpha+\cos\beta=2\cos\frac{\alpha+\beta}{2}\cos\frac{\alpha-\beta}{2}$, one easily sees that
\begin{equation}\label{flx}
a_{s-1}+a_{s+1}=\cos(s-2)\theta+\cos s\theta=2\cos (s-1)\theta \cos\theta=\lambda_k a_s,~ s=1,2,\ldots,n-1.
\end{equation}
 Note that $a_{0}=a_2$ and $a_{n}=\cos\frac{(2k-1)\pi}{2}=0$. We can rewrite \eqref{flx} in matrix form as follows:
 	$$\small{\begin{pmatrix}
 	0&2&{}&{}&{}\\
 	1&0&1&{}&{}\\
 	{}&1&\ddots&\ddots&{}\\
 	{}&{}&\ddots&\ddots&1\\
 	{}&{}&{}&1&0\\
 	\end{pmatrix}}\normalsize{\begin{pmatrix}a_1\\a_2\\a_3\\\vdots\\a_{n-1}\end{pmatrix}=\lambda_k\begin{pmatrix}a_1\\a_2\\a_3\\\vdots\\a_{n-1}\end{pmatrix}},$$
 that is, $B^\T \xi_k=\lambda_k\xi_k$. This completes the proof.
\end{proof}

 \begin{lemma}\label{fordet}
 	$\det[\xi_1,\xi_2,\ldots,\xi_{n-1}]=\pm2^{-\frac{n-2}{2}}(n-1)^\frac{n-1}{2}$, where $\xi_1,\xi_2,\ldots,\xi_{n-1}$ are defined as in Lemma \ref{eigB}.
 \end{lemma}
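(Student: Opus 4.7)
The plan is to recognize each column $\xi_k$ as a vector of Chebyshev polynomial values. By the classical identity $T_j(\cos\theta)=\cos(j\theta)$, the $(i,k)$ entry of the matrix $[\xi_1,\ldots,\xi_{n-1}]$ is precisely $T_{i-1}(y_k)$, where $y_k=\cos\frac{(2k-1)\pi}{2(n-1)}$ and $T_{i-1}$ is the Chebyshev polynomial of the first kind. This reframes the problem as computing the determinant of the $(n-1)\times(n-1)$ matrix $\bigl(T_{i-1}(y_k)\bigr)_{1\le i,k\le n-1}$.

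Next, I would reduce this to a Vandermonde determinant by row operations. Since $T_{i-1}(x)$ is a polynomial in $x$ of degree $i-1$, one can successively subtract appropriate linear combinations of earlier rows from the $i$-th row to replace each $T_{i-1}(y_k)$ with the leading-term monomial $c_{i-1}y_k^{i-1}$, where $c_0=1$ and by Lemma \ref{basiccheb}(\rmnum{2}) we have $c_j=2^{j-1}$ for $j\ge 1$. These operations do not change the determinant, yielding
$$\det[\xi_1,\ldots,\xi_{n-1}]=\Bigl(\prod_{i=0}^{n-2}c_i\Bigr)\,\prod_{1\le k<j\le n-1}(y_j-y_k)=2^{(n-2)(n-3)/2}\prod_{1\le k<j\le n-1}(y_j-y_k),$$
since $\sum_{j=1}^{n-2}(j-1)=(n-2)(n-3)/2$.

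Finally, I would apply Lemma \ref{forfirst} with $m=n-1$ to evaluate the Vandermonde product. Pulling the factor of $2$ out of each of the $\binom{n-1}{2}$ differences, Lemma \ref{forfirst} gives
$$\prod_{1\le k<j\le n-1}(y_j-y_k)=\pm 2^{(n-2)/2-(n-1)(n-2)/2}(n-1)^{(n-1)/2}.$$
Multiplying by the prefactor $2^{(n-2)(n-3)/2}$ and combining the exponents of $2$ — namely $(n-2)(n-3)/2+(n-2)/2-(n-1)(n-2)/2=-(n-2)/2$ — yields exactly $\pm 2^{-(n-2)/2}(n-1)^{(n-1)/2}$, as claimed.

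The only delicate point is the bookkeeping of the three separate powers of $2$ (from the leading coefficients of the Chebyshev polynomials, from factoring $2$'s out of the Vandermonde differences, and from Lemma \ref{forfirst} itself); everything else is either an immediate identification with Chebyshev polynomials or a standard Vandermonde computation, so I do not expect any conceptual obstacle beyond careful arithmetic.
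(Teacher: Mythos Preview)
Your proof is correct, and it takes a genuinely different route from the paper's own argument. The paper does not use the Chebyshev/Vandermonde identification at all; instead it sets $M=[\xi_1,\ldots,\xi_{n-1}]$, computes the Gram matrix $MM^\T$ directly by showing (via Lemma~\ref{sumcos}) that the rows $\eta_i$ of $M$ are pairwise orthogonal with $\eta_1\eta_1^\T=n-1$ and $\eta_i\eta_i^\T=(n-1)/2$ for $i\ge 2$, and then takes a square root of $\det(MM^\T)$. Your approach instead factors the matrix as a lower-triangular change-of-basis (from monomials to Chebyshev polynomials) times a Vandermonde matrix, and then evaluates the Vandermonde product by invoking Lemma~\ref{forfirst}, which the paper proved for a different purpose. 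The trade-off: the paper's argument is self-contained trigonometry and incidentally exhibits the DCT-type orthogonality of the $\eta_i$, while your argument is cleaner algebraically, reuses an existing lemma, and makes the connection to the discriminant of $T_{n-1}$ explicit. Both are short; yours arguably has less bookkeeping once Lemma~\ref{forfirst} is available.
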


\begin{proof}
	Let $M=[\xi_1,\xi_2,\ldots,\xi_{n-1}]$ and $\eta_i$ be the $i$-th row of $M$. That is, $$\eta_i=\left(\cos\frac{(i-1)\pi}{2(n-1)},\cos\frac{3(i-1)\pi}{2(n-1)},\cos\frac{5(i-1)\pi}{2(n-1)},\ldots,\cos\frac{(2n-3)(i-1)\pi}{2(n-1)}\right).$$
	
\noindent	\emph{Claim} 1. $\eta_1\eta_1^\T=n-1$ and  $\eta_i\eta_i^\T=\frac{n-1}{2}$ for $i=2,3,\ldots,n-1$.

Note that $\eta_1=e_{n-1}$. We have $\eta_1\eta_1^\T=n-1$. Now assume $i\in \{2,3,\ldots,n-1\}$.
Write $\alpha=\frac{(i-1)\pi}{2(n-1)}$. Then $0<\alpha<\pi/2$ and hence $\sin 2\alpha \neq 0$.  Using the formulas $\cos^2 x=\frac{1}{2}(1+\cos 2x)$ and Lemma \ref{sumcos},  we have
\begin{eqnarray*}
\eta_i\eta_i^\T&=&\sum_{k=1}^{n-1}\cos^2 (2k-1)\alpha\\
	&=&\frac{n-1}{2}+\frac{1}{2}\sum_{k=1}^{n-1}\cos (4k-2)\alpha\\
	&= &\frac{n-1}{2}+\frac{1}{4\sin2\alpha}\sin(4n-4)\alpha\\
		&= &\frac{n-1}{2}+\frac{1}{4\sin2\alpha}\sin2(i-1)\pi\\
	&=&\frac{n-1}{2}.
\end{eqnarray*}
This proves Claim 1.

\noindent\emph{Claim} 2. $\eta_i\eta_j^\T=0$ for distinct $i,j$ in $\{1,2,\ldots,n-1\}$.

Write $\alpha=\frac{(i-1)\pi}{2(n-1)}$ and $\beta=\frac{(j-1)\pi}{2(n-1)}$. As $i$ and $j$ are distinct integers in $\{1,2,\ldots,n-1\}$, we see that $\alpha+\beta\in(0,\pi)$ and $\alpha-\beta\in(-\frac{\pi}{2},0)\cup(0,\frac{\pi}{2})$. Thus, both $\sin(\alpha+\beta)$ and $\sin(\alpha-\beta)$ are nonzero. Using a similar argument as above, we have
\begin{eqnarray*}
	\eta_i\eta_j^\T&=&\sum_{k=1}^{n-1}\cos (2k-1)\alpha\cdot\cos (2k-1)\beta\\
	&=&\frac{1}{2}\sum_{k=1}^{n-1}\left(\cos(2k-1)(\alpha+\beta)+\cos(2k-1)(\alpha-\beta)\right)\\
	&= &\frac{1}{4\sin(\alpha+\beta)}\sin 2(n-1)(\alpha+\beta)+\frac{1}{4\sin(\alpha-\beta)}\sin2(n-1)(\alpha-\beta)\\
	&=&\frac{1}{4\sin(\alpha+\beta)}\sin(i+j-2)\pi+\frac{1}{4\sin(\alpha-\beta)}\sin(i-j)\pi\\
	&=&0.
\end{eqnarray*}
Claim 2 follows.

By Claims 1 and 2, we have $$MM^\T=(\eta_i\eta_j^\T)=\diag[n-1,\underbrace{(n-1)/2,(n-1)/2,\ldots,(n-1)/2}_{n-2}].$$
It follows that
$$\det M=\pm\sqrt{\det MM^\T}=\pm2^{-\frac{n-2}{2}}(n-1)^\frac{n-1}{2}.$$ This completes the proof of Lemma \ref{fordet}.
\end{proof}
\begin{lemma}\label{etxi}
Let $\xi_j$ be defined as in Lemma \ref{eigB}. If $n\not\equiv 0\pmod{4}$ then $\prod_{j=1}^{n-1}e^\T \xi_j=\pm 2^{1-\lceil\frac{n}{2}\rceil}\neq 0$, otherwise $e^\T \xi_{\frac{n}{2}}=0$ and  $e^\T \xi_{j}\neq 0$ for each $j\in\{1,2,\ldots,n-1\}\setminus\{\frac{n}{2}\}$.
\end{lemma}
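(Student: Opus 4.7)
My strategy has three stages: obtain a closed form for $e^\T\xi_k$, characterize its vanishing, and evaluate the product. Starting from $e^\T\xi_k=\sum_{i=0}^{n-2}\cos(i\alpha_k)$ with $\alpha_k=(2k-1)\pi/(2(n-1))$, the usual closed form for a cosine series together with a product-to-sum simplification---using $\sin((2n-2)\phi_k)=(-1)^{k-1}$ and $\cos((2n-2)\phi_k)=0$ for $\phi_k:=\alpha_k/2$---brings $e^\T\xi_k$ into the form $\sin\psi_k/(\sqrt{2}\,\sin\phi_k)$, where $\psi_k:=\phi_k+(-1)^{k-1}\pi/4=N_k\pi/(4(n-1))$ and $N_k:=2k-1+(-1)^{k-1}(n-1)$. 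Since $\sin\phi_k>0$ on our range and $|N_k|<4(n-1)$, we have $e^\T\xi_k=0$ iff $N_k=0$; a short parity split shows this forces $k$ even and $k=n/2$, i.e.\ $4\mid n$ and $k=n/2$. This handles both the vanishing and nonvanishing assertions of the lemma.

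Assume now $4\nmid n$. Corollary \ref{prodsin} gives $\prod_k\sin\phi_k=2^{3/2-n}$, so $\prod_{k=1}^{n-1}e^\T\xi_k=2^{(n-2)/2}\prod_{k=1}^{n-1}\sin\psi_k$. The crux is a multiset identity: $\{|\sin\psi_k|\}_{k=1}^{n-1}$ coincides with $\{\sin\tfrac{(2j-1)\pi}{4(n-1)}\}_{j=1}^{n-1}$ when $n$ is odd, and with $\{\sin\tfrac{(2j-1)\pi}{2(n-1)}\}_{j=1}^{n-1}$ when $n\equiv 2\pmod 4$. For $n$ odd each $N_k$ is odd; for $n\equiv 2\pmod 4$ each $N_k=2M_k$ with $M_k$ odd, and one works modulo $2(n-1)$. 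In both cases the identity reduces to the combinatorial claim that no two $N_k$ (resp.\ $M_k$) are equal or are negatives modulo the appropriate modulus, which is a routine parity case analysis on $(k_1,k_2)$, since the relevant sums and differences $k_1\pm k_2+c(n)$ lie strictly between $0$ and the modulus.

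Given the multiset identity, the remaining sine products are standard: the first is Corollary \ref{prodsin} itself, giving $2^{3/2-n}$; the second follows by factoring the odd polynomial $T_{n-1}(x)=2^{n-2}x\prod_{j=1}^{(n-2)/2}(x^2-\cos^2\tfrac{(2j-1)\pi}{2(n-1)})$, evaluating at $x=1$ to obtain $\prod_{j=1}^{(n-2)/2}\sin^2\tfrac{(2j-1)\pi}{2(n-1)}=2^{2-n}$, and folding via $\sin\tfrac{(2j-1)\pi}{2(n-1)}=\sin\tfrac{(2(n-j)-1)\pi}{2(n-1)}$ and $\sin\tfrac{(n-1)\pi}{2(n-1)}=1$ to extend to $\prod_{j=1}^{n-1}\sin\tfrac{(2j-1)\pi}{2(n-1)}=2^{2-n}$. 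Substituting, $\prod_k e^\T\xi_k$ equals $\pm 2^{(1-n)/2}$ for $n$ odd and $\pm 2^{1-n/2}$ for $n\equiv 2\pmod 4$, both equal to $\pm 2^{1-\lceil n/2\rceil}$. I expect the parity bookkeeping behind the multiset identity to be the main technical hurdle; everything else follows directly from the Chebyshev machinery set up in Section 3.
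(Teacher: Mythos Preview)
Your argument is correct and shares the same overall architecture as the paper's proof: a closed form for $e^\T\xi_k$ via the cosine summation formula, Corollary~\ref{prodsin} for the denominator product, and a residue/multiset argument to evaluate the remaining trigonometric product. The technical execution, however, is genuinely different. The paper keeps the factorization $e^\T\xi_j=\sin\frac{n-1}{2}\alpha_j\cos\frac{n-2}{2}\alpha_j/\sin\frac{1}{2}\alpha_j$ and reduces everything to evaluating $\prod_j\cos\frac{n-2}{2}\alpha_j$; it then shows, via complete residue systems modulo $4m+1$ (for $n\equiv 2\pmod 4$) and modulo $8m$ (for $n$ odd), that this product equals the Vieta products in Lemmas~\ref{pc8m} and~\ref{pc4mp1}. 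Your route instead folds the two numerator factors into a single $\sin\psi_k$ with $\psi_k=\phi_k+(-1)^{k-1}\pi/4$, which makes the vanishing criterion $N_k=0$ completely transparent (the paper argues separately that $\cos\frac{n-2}{2}\alpha_j=0$ forces $j=n/2$). For the product you then bypass Lemmas~\ref{pc8m} and~\ref{pc4mp1} entirely, using Corollary~\ref{prodsin} a second time when $n$ is odd and the elementary evaluation $T_{n-1}(1)=1$ when $n\equiv 2\pmod 4$. Both approaches ultimately hinge on a bijection/multiset statement of comparable difficulty; yours buys a cleaner vanishing analysis and avoids two auxiliary lemmas, at the cost of the parity casework on $(k_1,k_2)$ needed to establish that the $N_k$ (resp.\ $M_k$) are pairwise distinct and pairwise non-opposite modulo $4(n-1)$ (resp.\ $2(n-1)$).
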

\begin{proof}
	Write $\alpha_j=\frac{(2j-1)\pi}{2(n-1)}$ for $j\in\mathbb{Z}$. Then $\xi_j=(1,\cos \alpha_j,\cos2\alpha_j,\ldots,\cos(n-2)\alpha_j)^\T$ for $j=1,2,\ldots,n-1$. Clearly, $\sin\frac{1}{2}\alpha_j\neq0$. By Lemma \ref{sumcos}, we have
	$$e^\T \xi_j=\sum_{k=1}^{n-1}\cos(k-1)\alpha_j=\frac{1}{2\sin\frac{1}{2}\alpha_j}\left(\sin(n-\frac{3}{2})\alpha_j+\sin\frac{1}{2}\alpha_j\right).$$
	Consequently, by the formula $\sin x+\sin y=2\sin\frac{1}{2}(x+y)\cos\frac{1}{2}{(x-y)}$, we have
	\begin{equation}\label{exi}
e^\T \xi_j=\frac{1}{\sin\frac{1}{2}\alpha_j}{ \sin\frac{n-1}{2}\alpha_j\cos\frac{n-2}{2}\alpha_j}.
	\end{equation}
	It follows that
	\begin{equation}\label{prdexi}
\prod_{j=1}^{n-1}e^\T\xi_j=\frac{\prod_{j=1}^{n-1}\sin\frac{n-1}{2}\alpha_j\prod_{j=1}^{n-1}\cos\frac{n-2}{2}\alpha_j}{\prod_{j=1}^{n-1}\sin\frac{1}{2}\alpha_j}.
	\end{equation}
	As $\frac{n-1}{2}\alpha_j=\frac{(2j-1)\pi}{4}$, we find that $\sin\frac{n-1}{2}\alpha_j=\pm \frac{\sqrt{2}}{2}$ and hence
	\begin{equation}\label{psi}
\prod_{j=1}^{n-1}\sin\frac{n-1}{2}\alpha_j=\pm 2^{-\frac{n-1}{2}}.
	\end{equation}
Noting $\frac{1}{2}\alpha_j=\frac{2j-1}{4(n-1)}\pi$ and using Corollary \ref{prodsin}, we have
\begin{equation}\label{psi2}
\prod_{j=1}^{n-1}\sin\frac{1}{2}\alpha_j=\prod_{j=1}^{n-1}\sin\frac{2j-1}{4(n-1)}\pi=2^{\frac{3}{2}-n}.
\end{equation}
Plugging \eqref{psi} and \eqref{psi2} into \eqref{exi}, we get
	\begin{equation}\label{exisimp}
\prod_{j=1}^{n-1}e^\T\xi_j=2^{\frac{n}{2}-1}\prod_{j=1}^{n-1}\cos\frac{n-2}{2}\alpha_j.
\end{equation}
\noindent\emph{Claim} 1. $\prod_{j=1}^{n-1}\cos\frac{n-2}{2}\alpha_j=\pm 2^{2-n}$ for $n\equiv 2\pmod{4}$.

Write $n=4m+2$. Then we have
$$\frac{n-2}{2}\alpha_j=\frac{m(2j-1)}{4m+1}\pi.$$
Let $S=\{m(2j-1)\colon\,1\le j\le 4m+1\}$.
As $\gcd(m,4m+1)=1$ and $\gcd(2,4m+1)=1$, one easily sees that $S$ is  a complete system of residues modulo $4m+1$. Thus we have
$$\prod_{j=1}^{n-1}\cos\frac{n-2}{2}\alpha_j=\prod_{s\in S}\cos\frac{s}{4m+1}\pi=\pm \prod_{j=0}^{4m}\cos\frac{j}{4m+1}\pi=\pm\prod_{j=1}^{4m}\cos\frac{j}{4m+1}\pi.$$
Claim 1 follows by Lemma \ref{pc4mp1}.

\noindent\emph{Claim} 2. $\prod_{j=1}^{n-1}\cos\frac{n-2}{2}\alpha_j=\pm 2^{\frac{3}{2}-n}$ for odd $n$.

 Note that $\alpha_{1-j}=-\alpha_j$. We have
 $$\prod_{j=1}^{n-1}\cos\frac{n-2}{2}\alpha_j=\prod_{j=1}^{n-1}\cos\frac{n-2}{2}\alpha_{1-j}=\prod_{j=-n+2}^{0}\cos\frac{n-2}{2}\alpha_{j},$$
and hence
\begin{equation}\label{pmsq}
 \prod_{j=1}^{n-1}\cos\frac{n-2}{2}\alpha_j=\pm\left(\prod_{j=-n+2}^{n-1}\cos\frac{n-2}{2}\alpha_{j}\right)^\frac{1}{2}.
\end{equation}

Write $n=2m+1$. Then we have
$$\frac{n-2}{2}\alpha_j=\frac{(2m-1)(2j-1)}{8m}\pi.$$
For $j\in\{-n+2,-n+3,\ldots,n-1\}=\{-2m+1,-2m+2,\ldots,2m\}$, let $r_j$ be the least nonnegative residue of $(2m-1)(2j-1)$  modulo $8m$. Let $j$ and $j'$ be any two distinct integers in $\{-2m+1,-2m+2,\ldots,2m\}$. Then $j\not
\equiv j'\pmod{4m}$ and hence $2j-1\not\equiv 2j'-1\pmod{8m}$. Consequently, $(2m-1)(2j-1)\not\equiv (2m-1)(2j'-1)\pmod{8m}$ as $2m-1$ and $8m$ are coprime. This means that $r_j\neq r_j'$. Note that each $r_j$ is odd and there are exactly $4m$ odd numbers in $\{0,1,\ldots,8m-1\}$. We must have
\begin{equation}\label{eqres}
\{r_{-2m+1},r_{-2m+2},\ldots,r_{2m}\}=\{1,3,\ldots,8m-1\}.
\end{equation}
As
$$\frac{n-2}{2}\alpha_j-\frac{r_j}{8m}\pi=\frac{(2m-1)(2j-1)-r_j}{8m}\pi=k\pi$$ for some $k\in\mathbb{Z}$, we have $\cos\frac{n-2}{2}\alpha_j=\pm \cos\frac{r_j}{8m}\pi.$ It follows from \eqref{eqres} that
$$\prod_{j=-n+2}^{n-1}\cos\frac{n-2}{2}\alpha_j=\pm \prod_{\substack{1\le j\le 8m-1\\ j\text{~odd}}}\cos\frac{j}{8m}\pi=\pm \prod_{j=1}^{4m}\cos\frac{2j-1}{8m}\pi.$$
Consequently, by \eqref{pmsq} and Lemma \ref{pc8m}, we have
\begin{equation}\label{pm}
\prod_{j=1}^{n-1}\cos\frac{n-2}{2}\alpha_j=\pm\left|\prod_{j=1}^{4m}\cos\frac{2j-1}{8m}\pi\right|^\frac{1}{2}=\pm 2^{-2m+\frac{1}{2}}=\pm 2^{\frac{3}{2}-n}.
\end{equation}
This proves Claim 2.

\noindent\emph{Claim} 3. Let $n=4m$ and $j\in\{1,2,\ldots,4m-1\}$. Then $e^\T \xi_j=0$ if and only if $j=2m$.

By \eqref{exi} and \eqref{psi}, we see that $e^\T \xi_j=0$ if and only if $\cos \frac{n-2}{2}\alpha_j=0$. As
$$\frac{n-2}{2}\alpha_j= \frac{(2m-1)(2j-1)}{2(4m-1)}\pi,$$
we find that $\cos \frac{n-2}{2}\alpha_j=0$ if and only if  $\frac{(2m-1)(2j-1)}{(4m-1)}$ is an odd integer. Write $h(j)=\frac{(2m-1)(2j-1)}{(4m-1)}$. Then $h(2m)=2m-1$, which is an odd integer. If $j\in\{1,2,\ldots,4m-1\}\setminus\{2m\}$ then we have $4m-1\nmid 2j-1$, which together with the fact $\gcd(2m-1,4m-1)=1$ implies that  $h(j)$ is not an integer. This proves Claim 3.

Suppose that $n\not\equiv 0\pmod{4}$. Using \eqref{exisimp} together  with Claims 1 and 2, we have
\begin{equation}\label{exin4}
\prod_{j=1}^{n-1}e^\T\xi_j=\begin{cases}
\pm 2^{1-\frac{n}{2}}&\text{if~$n\equiv 2\pmod{4}$},\\
\pm 2^{\frac{1}{2}-\frac{n}{2}}&\text{if~$n\equiv 1,3\pmod{4}$},
\end{cases}
\end{equation}
i.e., $\prod_{j=1}^{n-1}e^\T\xi_j= \pm2^{1-\lceil\frac{n}{2}\rceil}$. This together with Claim 3 completes the proof of this lemma.
\end{proof}
We are in a position to present the proof of Theorem  \ref{mainthm}.
\begin{proof}[Proof of Theorem \ref{mainthm}]
 Let $\lambda_1, \lambda_2,\ldots,\lambda_{n-1}$ and $\xi_1,\xi_2,\ldots,\xi_{n-1}$ be the eigenvalues and associated eigenvectors of $B^\T$ as described in Lemma \ref{eigB}. By Lemma \ref{forfirst} and Lemma \ref{fordet}, we have
	$\prod_{1\le k<j\le n-1} (\lambda_j-\lambda_k)=\pm 2^{\frac{n-2}{2}}(n-1)^{\frac{n-1}{2}}$ and $\det [\xi_1,\xi_2,\ldots,\xi_{n-1}]=\pm2^{-\frac{n-2}{2}}(n-1)^{\frac{n-1}{2}}$. 	If $4\nmid n$ then  by Lemma \ref{etxi}, we have $\prod_{j=1}^{n}e^\T\xi_j=\pm 2^{1-\lceil\frac{n}{2}\rceil}.$ It follows from Lemma  \ref{relWA} that
	$$\det W(B)=\frac{\prod_{1\le k<j\le n-1} (\lambda_j-\lambda_k)\prod_{j=1}^{n-1} e^\T\xi_j}{\det[\xi_1,\xi_2,\ldots,\xi_{n-1}]}=\pm 2^{\lfloor\frac{n}{2}\rfloor-1}.$$ Noting that
	$\hat{W}(D)=W(B)$ by Lemma \ref{cofdet}, Theorem \ref{mainthm}(\rmnum{1}) is proved.  Similarly, if $4\mid n$ then we see that $\rank\, W(B)=|\{j\colon\,1\le j\le n-1 ~and~e^\T \xi_j\neq 0\}|=n-2$, that is, $\rank\, \hat{W}(D)=n-2$. This completes the proof.
	\end{proof}
	
	We proceed to prove Eq. \eqref{rkW} and Theorem \ref{snfd}.
	\begin{lemma}\label{equsnf}
		$W(D_n)$ and $\begin{pmatrix}0&0\\\hat{W}(D_n)&0\end{pmatrix}_{n\times n}$ have the same Smith normal form. In particular, $\rank\, W(D_n)=\rank\, \hat{W}(D_n)$.
	\end{lemma}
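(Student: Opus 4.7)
The plan is to exhibit explicit unimodular row and column operations that transform $W(D_n)$ into the block matrix $\begin{pmatrix}0&0\\\hat{W}(D_n)&0\end{pmatrix}$. Since the Smith normal form is invariant under left and right multiplication by unimodular integer matrices, this will yield the equality of SNFs, and the rank identity will follow immediately from the fact that the rank equals the number of nonzero invariant factors.

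First I would exploit the twin structure built into the partition $\Pi$. Vertices $1$ and $2$ are both pendant vertices attached to vertex $3$, so rows $1$ and $2$ of the adjacency matrix $A=A(D_n)$ coincide. A short induction on $k$ shows that rows $1$ and $2$ of $A^k$ coincide for every $k\ge 1$, and the trivial $k=0$ case gives $(A^0 e)_1=(A^0 e)_2 = 1$; hence rows $1$ and $2$ of each column $A^k e$ of $W(D_n)$ agree. The unimodular row operation $R_1\leftarrow R_1-R_2$ then kills the first row, and by the very definition of $\hat{W}(D_n)$ (delete the first row and last column of $W(D_n)$) the resulting matrix already carries $\hat{W}(D_n)$ in its lower-left $(n-1)\times(n-1)$ block.

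Next I would annihilate the last column $A^{n-1}e$ by an integer combination of the preceding columns. From $AC=CB$ and $Ce_{n-1}=e_n=e$, established in Lemma~\ref{cofdet}, one has $A^k e=CB^k e_{n-1}$ for every $k\ge 0$. Since $B$ has integer entries, its characteristic polynomial lies in $\mathbb{Z}[x]$, so Cayley--Hamilton supplies integers $c_0,c_1,\ldots,c_{n-2}$ with $B^{n-1}=\sum_{i=0}^{n-2}c_i B^i$. Multiplying on the right by $e_{n-1}$ and then by $C$ on the left yields $A^{n-1}e=\sum_{i=0}^{n-2}c_i A^i e$, so the single unimodular column operation subtracting $\sum c_i$ times column $i+1$ from the last column zeroes that column while leaving the first $n-1$ columns untouched.

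Combining the two operations transforms $W(D_n)$ precisely into $\begin{pmatrix}0&0\\\hat{W}(D_n)&0\end{pmatrix}$, which proves the SNF statement, and then $\rank\,W(D_n)=\rank\,\hat{W}(D_n)$ is immediate. I do not foresee a serious obstacle: the only point demanding care is that the coefficients used to kill the last column must lie in $\mathbb{Z}$, and this is handed to us by the integrality of $B$ together with Cayley--Hamilton.
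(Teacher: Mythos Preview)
Your proof is correct and follows the same two-step skeleton as the paper: kill the first row using the twin vertices $1$ and $2$, then kill the last column by expressing $A^{n-1}e$ as an integer combination of earlier columns.

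The one genuine difference lies in how the integer coefficients for the column step are produced. The paper argues abstractly: since the first two rows agree we have $r=\rank W(D_n)\le n-1$, and then it invokes (without further justification) the standard fact that $A^{r}e$ is an \emph{integer} combination of $e,Ae,\ldots,A^{r-1}e$; shifting by $A^{n-1-r}$ gives the desired relation for the last column. Your route is more concrete and more self-contained: you apply Cayley--Hamilton to the $(n-1)\times(n-1)$ integer divisor matrix $B$ and push the resulting monic integer relation through $A^kC=CB^k$ and $Ce_{n-1}=e_n$. This sidesteps any appeal to the integrality of the Krylov minimal polynomial and exploits the equitable partition already set up in Lemma~\ref{cofdet}. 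The paper's argument is more portable (it would work for any graph with $\rank W<n$), whereas yours is tailored to $D_n$ but leaves no step unjustified.
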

\begin{proof}
Noting that the first two rows of $W(D_n)$ are equal, we clearly have $\rank\, W(D_n)\le n-1$. Let $r=\rank\, W(D_n)$ and $A=A(D_n)$. Then $A^re$ can be written as a linear
combination of $e, Ae,\ldots, A^{r-1}e$ with integral coefficients.  As $r\le n-1$, we see that  $A^{n-1}e$ can be  written as a linear
combination of $A^{n-1-r}e, A^{n-r}e,\ldots, A^{n-2}e$ with integral coefficients. Thus, using some evident elementary row and column operations on $W(D_n)$ over the integer ring $\mathbb{Z}$, we can
change $W(D_n)$ to the form $\begin{pmatrix}0&0\\\hat{W}(D_n)&0\end{pmatrix}$. This proves the lemma.
\end{proof}
\begin{proof}[Proof  of Eq. \eqref{rkW}] By  Lemma \ref{equsnf}, we have $\rank\, W(D_n)=\rank\, \hat{W}(D_n)$. Now Eq. \eqref{rkW} clearly follows from Theorem \ref{mainthm}.
	\end{proof}
For an integral matrix $M$ of order $n$, we use  $\rank_2 M$ to denote the rank of $W$ over the binary field $\mathbb{Z}/2\mathbb{Z}$.
\begin{lemma}[\cite{wang2006PHD}]\label{rk2}
	Let $G$ be any graph with $n$ vertices. Then $\rank_2 W(G)\le \lceil\frac{n}{2} \rceil$.
In other words, at most $\lceil\frac{n}{2}\rceil$ invariant factors are congruent to 1 modulo 2.
	\end{lemma}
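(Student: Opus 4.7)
The plan is to work over $\mathbb{F}_2$ and show that the column space $V\subseteq \mathbb{F}_2^n$ spanned by the columns of $W(G)\pmod 2$ is, up to at most one dimension, totally isotropic for the standard bilinear form $\langle u,v\rangle = u^\T v$; the rank bound will then fall out from the classical fact that every isotropic subspace $U\subseteq(\mathbb{F}_2^n,\langle\cdot,\cdot\rangle)$ satisfies $\dim U\le\lfloor n/2\rfloor$.

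The key arithmetic input will be the identity $w_k:=e^\T A^k e\equiv 0\pmod 2$ for every $k\ge 1$. I would derive this from two mod-$2$ facts. If $k=2m+1$ is odd, then $w_k=(A^m e)^\T A(A^m e)$, and since $A$ is symmetric with zero diagonal, any such form $x^\T A x$ vanishes mod $2$. If $k=2m$ is positive even, then using $x^2\equiv x\pmod 2$ componentwise I get $(A^m e)^\T(A^m e)=\sum_\ell (A^m e)_\ell^{\,2}\equiv \sum_\ell (A^m e)_\ell=w_m\pmod 2$. A short induction starting from the base case $w_1=2|E(G)|\equiv 0$ then gives $w_k\equiv 0$ for all $k\ge 1$.

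The payoff is immediate: the $(i,j)$ entry of the Gram matrix $W(G)^\T W(G)$ is precisely $w_{i+j}$, so modulo $2$ the whole matrix vanishes except possibly at position $(0,0)$, where the entry is $e^\T e\equiv n\pmod 2$. Equivalently, any two columns of $W(G)$ are $\mathbb{F}_2$-orthogonal, with the one potential failure being that the first column $e$ itself may not be isotropic when $n$ is odd.

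To finish I would split on the parity of $n$. When $n$ is even, all columns of $W(G)$ are pairwise orthogonal and self-isotropic, so $V$ itself is totally isotropic and $\dim V\le n/2=\lceil n/2\rceil$. When $n$ is odd, the subspace $V'$ spanned by $Ae,A^2 e,\ldots,A^{n-1}e$ is totally isotropic (all pairwise inner products are $w_k$ with $k\ge 2$), hence $\dim V'\le(n-1)/2$; adjoining $e$ raises the dimension by at most one, giving $\dim V\le(n+1)/2=\lceil n/2\rceil$. The only nontrivial step is the mod-$2$ identity $w_{2m}\equiv w_m$ that powers the induction; once this is in hand, the rest is a short bookkeeping argument around a rank-$\le 1$ Gram matrix.
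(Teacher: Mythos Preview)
Your argument is correct. The paper itself does not supply a proof of this lemma---it is quoted from the reference \cite{wang2006PHD} without argument---so there is no in-paper proof to compare against. The route you take (showing $e^\T A^k e\equiv 0\pmod 2$ for all $k\ge 1$, so that $W(G)^\T W(G)$ has $\mathbb{F}_2$-rank at most $1$, and then bounding the column space by the isotropic-subspace inequality $\dim U\le\lfloor n/2\rfloor$) is the standard proof of this fact in the literature, and every step you outline goes through as stated.
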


\begin{proof}[Proof of Theorem \ref{snfd}]
	As $4\nmid n$, Theorem \ref{mainthm} indicates that $\det \hat{W}(D_n)=\pm 2^{\lfloor\frac{n}{2}\rfloor-1}$. Thus, by Lemma \ref{equsnf}, we find that the Smith normal form of $W(D_n)$ has the following pattern:
		\begin{equation} \label{generalform}
	\diag[\underbrace{1,1,\ldots,1}_{r},\underbrace{2^{l_1},2^{l_2},\ldots,2^{l_{n-1-r}}}_{n-1-r},0],
	\end{equation}
where $1\le l_1\le l_2\le\cdots\le l_{n-1-r}$ and $l_1+l_2+\cdots+l_{n-1-r}=\lfloor\frac{n}{2}\rfloor-1$. By Lemma \ref{rk2}, we have $r\le\lceil\frac{n}{2}\rceil$ and hence $n-1-r\ge \lfloor\frac{n}{2}\rfloor-1$. Thus, we have
\begin{equation}\label{addl}
l_1+l_2+\cdots+l_{n-1-r}\ge n-1-r\ge \left\lfloor\frac{n}{2}\right\rfloor-1.
\end{equation}
Noting that both equalities \eqref{addl} must hold simultaneously, we must have $l_1=l_2=\cdots=l_{n-1-r}=1$ and $r=\lceil\frac{n}{2}\rceil$. Thus \eqref{generalform} becomes
	\begin{equation} \label{generalform2}
\diag[\underbrace{1,1,\ldots,1}_{\lceil\frac{n}{2}\rceil},\underbrace{2,2,\ldots,2}_{\lfloor\frac{n}{2}\rfloor-1},0].
\end{equation}
This proves Theorem \ref{snfd}.
\end{proof}

	\section*{Acknowledgments}
This work is supported by the	National Natural Science Foundation of China (Grant Nos. 12001006 and 11971406) and the Scientific Research Foundation of Anhui Polytechnic University (Grant No.\,2019YQQ024).

\end{document}